\newcommand{\C}{{\mathbb{C}}}
\newcommand{\F}{{\mathbb{F}}}
\newcommand{\Q}{{\mathbb{Q}}}
\newcommand{\R}{{\mathbb{R}}}
\newcommand{\Z}{{\mathbb{Z}}}
\newcommand{\longhookrightarrow}{\lhook\joinrel\longrightarrow}
\newtheorem{theorem}{Theorem}[section]
\newtheorem{corollary}[theorem]{Corollary}
\newtheorem{proposition}[theorem]{Proposition}\theoremstyle{definition}  
\newtheorem{proposition/definition}[theorem]{Proposition/Definition}\theoremstyle{definition}
\numberwithin{theorem}{section}
\numberwithin{equation}{section}
\begin{document}

\title{Stickelberger and the Eigenvalue Theorem}

\author{David A. Cox}
\address{Department of Mathematics and Statistics, Amherst College, Amherst, MA 01002}
\email{dacox@amherst.edu}

\begin{abstract}
This paper explores the relation between the Eigenvalue Theorem and the work of Ludwig Stickelberger (1850-1936).
\end{abstract}

\dedicatory{To David Eisenbud on the occasion of his 75th birthday.}

\subjclass[2010]{11R29, 13P15, 15A18}

\keywords{eigenvalue, trace, quadratic form, polynomial system}
\maketitle

\section{Introduction} The Eigenvalue Theorem is a standard result in computational algebraic geometry.  Given a field $F$ and polynomials $f_1,\dots,f_s \in F[x_1,\dots,x_n]$, it is well known that the system
\begin{equation}
\label{syspolys}
f_1 = \cdots = f_s = 0
\end{equation}
has finitely many solutions over the algebraic closure $\overline{F}$ of $F$ if and only if 
\[
A = F[x_1,\dots,x_n]/\langle f_1,\dots,f_s\rangle
\]
has finite dimension over $F$ (see, for example, Theorem 6 of \cite[Ch.~5, \S3]{CLO}).  

A polynomial $f \in F[x_1,\dots,x_n]$ gives a
multiplication map
\[
m_f : A \longrightarrow A.
\]
A basic version of the Eigenvalue Theorem goes as follows:

\begin{theorem}[Eigenvalue Theorem]
\label{ETbasic}
\label{eigenvalue}
When $\dim_F A < \infty$, the eigenvalues of $m_f$ are the values of $f$ at the finitely many solutions of \eqref{syspolys} over $\overline{F}$.
\end{theorem}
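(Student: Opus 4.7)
The plan is to pass to the algebraic closure and then characterize the eigenvalues of $m_f$ via the units of $A$, using the Nullstellensatz to identify the maximal ideals of $A$ with the points of $V(I)$.

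First I would reduce to the case $F = \overline{F}$. Setting $\overline{A} = A \otimes_F \overline{F} \cong \overline{F}[x_1,\dots,x_n]/\langle f_1,\dots,f_s\rangle$, the operator $m_f$ extends to an $\overline{F}$-linear endomorphism of $\overline{A}$ with the same characteristic polynomial as over $F$, so the multiset of eigenvalues (viewed in $\overline{F}$) is unchanged. The solutions $p \in \overline{F}^n$ and the values $f(p)$ are, of course, also unchanged. So I may assume $F$ is algebraically closed.

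The next step is to identify the maximal ideals of $A$. Since $\dim_F A < \infty$, the ring $A$ is Artinian, and the (weak) Nullstellensatz gives a bijection between $V(I) = \{p_1,\dots,p_k\}$ and the set of maximal ideals of $A$ via $p_i \mapsto M_i := \langle x_1-p_{i,1},\dots,x_n-p_{i,n}\rangle/I$. In particular, $g \in A$ is a unit if and only if $g \notin M_i$ for every $i$, i.e.\ if and only if $g(p_i) \ne 0$ for all $i$.

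With these in hand, the theorem follows from a one-line equivalence: for $\lambda \in F$,
\[
m_f - \lambda\cdot \mathrm{Id} = m_{f-\lambda} \ \text{is invertible} \iff f-\lambda \ \text{is a unit in } A \iff f(p_i) \ne \lambda \ \text{for all } i.
\]
Since $A$ is finite-dimensional, $m_{f-\lambda}$ fails to be invertible precisely when $\lambda$ is an eigenvalue, so the eigenvalues of $m_f$ are exactly $\{f(p_1),\dots,f(p_k)\}$.

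I expect the main technical step to be the identification of $\mathrm{MaxSpec}(A)$ with $V(I)$ in the finite-dimensional setting: this is where one uses the Nullstellensatz together with the fact that an Artinian ring has finitely many prime ideals, all maximal. Once that dictionary is set up, the translation between units of $A$ and non-vanishing on $V(I)$, and hence the eigenvalue characterization, is immediate. A natural enhancement of this argument — presumably what will be developed further in the paper à la Stickelberger — would track the algebraic \emph{multiplicity} of each $f(p_i)$ as a root of the characteristic polynomial of $m_f$ by decomposing $A$ via the Chinese Remainder Theorem into its local Artinian factors at the $p_i$.
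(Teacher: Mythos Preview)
Your proof is correct and is a clean, elementary argument for Theorem~\ref{ETbasic} as stated. The paper, however, does not prove Theorem~\ref{ETbasic} directly: it instead deduces it from Theorem~\ref{ETStick}, which uses the Artinian decomposition $\overline{A} \simeq \prod_a \overline{A}_a$ and shows that on each local factor $\overline{A}_a$ the map $m_f$ has the single eigenvalue $f(a)$. That route immediately yields the characteristic polynomial~\eqref{mfchpoly} and hence the multiplicities $\mu(a)$, whereas your unit-based argument identifies only the \emph{set} of eigenvalues (which is all Theorem~\ref{ETbasic} claims). You in fact anticipate exactly this: the ``natural enhancement'' you describe at the end---decomposing $A$ via the Chinese Remainder Theorem into its local Artinian pieces---is precisely the paper's Theorem~\ref{ETStick}. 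So your approach is a genuinely shorter path to the bare eigenvalue statement, while the paper's approach buys the finer structure (multiplicities, trace, determinant) that the rest of the article is about.
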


For $\overline{A} = A \otimes_F \overline{F}$, we have a canonical isomorphism of $F$-algebras
\[
\overline{A} =  \prod_{a \in  \mathbf{V}_{\overline{F}}(f_1,\dots,f_s)} \overline{A}_a,
\]
where $\overline{A}_a$ is the localization of $\overline{A}$ at the maximal ideal corresponding to $a$.  Following \cite[Thm.\ 3.3]{tapas}, we get a more precise version of the Eigenvalue Theorem:

\begin{theorem}[Stickelberger's Theorem]
\label{ETStick}
For every $a \in \mathbf{V}_{\overline{F}}(f_1,\dots,f_s)$, we have $m_f(\overline{A}_a) \subseteq \overline{A}_a$, and the restriction of $m_f$ to $\overline{A}_a$ has only one eigenvalue $f(a)$.  
\end{theorem}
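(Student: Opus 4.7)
The plan is to exploit the local-Artinian structure of each factor $\overline{A}_a$ in the product decomposition, which reduces the problem to showing that $f - f(a)$ is a nilpotent element of $\overline{A}_a$.

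First I would establish that $m_f$ respects the product decomposition. The canonical isomorphism $\overline{A} = \prod_a \overline{A}_a$ comes from a system of orthogonal idempotents $\{e_a\}_{a \in \mathbf{V}_{\overline{F}}(f_1,\dots,f_s)}$ with $\sum_a e_a = 1$, and $\overline{A}_a = e_a \overline{A}$. For any $x \in \overline{A}_a$, one has $e_a x = x$, so $m_f(x) = fx = f(e_a x) = e_a(fx) \in e_a \overline{A} = \overline{A}_a$. This gives $m_f(\overline{A}_a) \subseteq \overline{A}_a$, so it makes sense to restrict and speak of eigenvalues.

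Next I would identify $f(a)$ algebraically inside $\overline{A}_a$. The ring $\overline{A}_a$ is a finite-dimensional local $\overline{F}$-algebra whose maximal ideal $\mathfrak{m}_a$ corresponds to the point $a$, so $\mathfrak{m}_a$ is generated by the images of $x_1 - a_1,\dots,x_n - a_n$, and the residue map $\overline{A}_a \twoheadrightarrow \overline{A}_a/\mathfrak{m}_a = \overline{F}$ is evaluation at $a$. Consequently the image of $f$ in the residue field is $f(a)$, i.e.\ $f - f(a) \in \mathfrak{m}_a$.

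Finally I would invoke nilpotency. Because $\overline{A}_a$ is a finite-dimensional local $\overline{F}$-algebra, $\mathfrak{m}_a$ is nilpotent: $\mathfrak{m}_a^N = 0$ for some $N$. Hence $(f - f(a))^N = 0$ in $\overline{A}_a$, which translates to $(m_f - f(a)\,\mathrm{id})^N = 0$ as an $\overline{F}$-linear operator on $\overline{A}_a$. Thus the restriction of $m_f$ to $\overline{A}_a$ is $f(a)\,\mathrm{id}$ plus a nilpotent operator, and its unique eigenvalue is $f(a)$.

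The only subtle point, and the closest thing to a genuine obstacle, is justifying that the residue field of $\overline{A}_a$ is exactly $\overline{F}$ with the residue map being evaluation at $a$; this uses that the maximal ideals of $\overline{A}$ are in bijection with $\mathbf{V}_{\overline{F}}(f_1,\dots,f_s)$ via Nullstellensatz. Once that is in hand, the rest is a short structural argument about finite-dimensional local algebras over an algebraically closed field.
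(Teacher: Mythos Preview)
Your argument is correct and is the standard direct proof: invariance of the factors under $m_f$ is immediate from the idempotent description, and the nilpotency of $\mathfrak{m}_a$ in the finite-dimensional local algebra $\overline{A}_a$ forces $m_f|_{\overline{A}_a} = f(a)\,\mathrm{id} + (\text{nilpotent})$, hence a single eigenvalue $f(a)$.

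The paper, however, does \emph{not} supply its own proof of Theorem~\ref{ETStick}; it merely states the result with a citation to \cite{tapas}.  What the paper does instead, in Section~4, is develop an alternative route to the eigenvalue content: it proves the Scheja--Storch trace formula (Theorem~\ref{ETSS}) via the separable hull, deduces the trace identity of Corollary~\ref{StickTrThm}, and then in Proposition~\ref{TrET} recovers the characteristic polynomial $\prod_a (f(a)-x)^{\mu(a)}$ from the traces of all powers $(m_f)^\ell$.  That detour is genuinely different from your local-nilpotency argument and is in a sense less efficient: it requires characteristic zero (so that traces of powers determine the characteristic polynomial), whereas your proof works over any base field.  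On the other hand, the paper's route is the one that connects the statement back to Stickelberger's actual 1897 trace formula, which is the historical point of the article.
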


This result easily implies Theorem~\ref{ETbasic} and enables us to compute 
the characteristic polynomial of $m_f$. Namely, the multiplicity of $a$ as a solution of \eqref{syspolys} is
\[
\mu(a) = \dim_{\overline{F}} \overline{A}_a,
\]
and then Theorem~\ref{ETStick} tells us that the characteristic polynomial of $m_f$ is
\begin{equation}
\label{mfchpoly}
\det(m_f-x\hskip1pt I) = \!\!\prod_{a \in  \mathbf{V}_{\overline{F}}(f_1,\dots,f_s)} \!\!(f(a) -x)^{\mu(a)}.
\end{equation}
Furthermore, since the trace of a matrix can be read off from its characteristic polynomial, \eqref{mfchpoly} gives the formula
\begin{equation}
\label{mftrace}
\mathrm{Tr}(m_f) = \!\!\sum_{a \in  \mathbf{V}_{\overline{F}}(f_1,\dots,f_s)} \!\!\mu(a) f(a).
\end{equation} 
This trace formula will play an important role in what follows.

The name ``Stickelberger's Theorem'' in Theorem~\ref{ETStick} is from \cite{tapas}.  Versions of Theorems~\ref{ETbasic} and~\ref{ETStick} also named ``Stickelberger's Theorem'' can be found in the papers \cite{GPV,sottile,Zeng}, and \cite{Scheib} has a ``Stickelberger's Theorem'' for 
positive-dimensional solution sets.  A ``Stickelberger's Theorem'' that focuses on \eqref{mfchpoly} and \eqref{mftrace} can be found in \cite{BPR}. A common feature of these papers is that no reference to Stickelberger is given!  An exception is \cite{GPV}, which refers to the wrong paper of Stickelberger.  

There is an actual theorem of Ludwig Stickelberger lurking in the background, in the paper \emph{\"Uber eine neue Eigenschaft der Diskriminanten algebraischer Zahlk\"orper} \cite{S1897} that appeared in the proceedings of the first International Congress of Mathematicians, held in Z\"urich in 1897.  This paper includes Theorems I--XIII, most dealing with traces and properties of the discriminant of a number field.  

In \cite{S1897}, Stickelberger fixes a number field $\Omega$ of degree $n$ and discriminant $D$.  Here are two of the theorems from \cite{S1897}:

\begin{theorem}[Theorems VII and XIII of \cite{S1897}]
\label{Stick2Thm}
If a prime $p$ does not divide $D$, then the Legendre symbol $\big(\tfrac{D}{p}\big)$ satsifes
\[
\Big(\frac{D}{p}\Big) = (-1)^{n-m},
\]
where $p\mathcal{O} = \mathfrak{p}_1\cdots \mathfrak{p}_m$ is the prime factorization in the ring $\mathcal{O}$ of algebraic integers of $\Omega$.
\end{theorem}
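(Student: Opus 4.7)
The plan is to reduce modulo $p$, apply Theorem~\ref{ETStick} to compute the reduced trace form, and identify the Legendre symbol with the sign of a Frobenius-induced permutation of embeddings. Since $p\nmid D$, the prime $p$ is unramified, so
\[
\mathcal{O}/p\mathcal{O} \;\cong\; \prod_{i=1}^{m} \F_{p^{f_i}}, \qquad f_1+\cdots+f_m = n,
\]
and base-changing to $\overline{\F}_p$ yields $\overline{\F}_p^{\,n}$, together with $n$ distinct $\F_p$-algebra homomorphisms $\bar\sigma_1,\dots,\bar\sigma_n\colon\mathcal{O}/p\mathcal{O}\to\overline{\F}_p$. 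Theorem~\ref{ETStick} applied to this \'etale $\F_p$-algebra says that each eigenvalue of $m_{\bar f}$ is some $\bar\sigma_i(\bar f)$ with multiplicity one, so the trace formula \eqref{mftrace} reads $\mathrm{Tr}(\bar f) = \sum_i \bar\sigma_i(\bar f)$. Since the $\bar\sigma_i$ are multiplicative, fixing an integral basis $\omega_1,\dots,\omega_n$ of $\mathcal{O}$ factors the reduced trace matrix as $\bigl(\bar\sigma_k(\bar\omega_i)\bigr)^{\!\top}\bigl(\bar\sigma_k(\bar\omega_j)\bigr)$, whence
\[
D \;\equiv\; \delta^2 \pmod p, \qquad \delta \,:=\, \det\bigl(\bar\sigma_i(\bar\omega_j)\bigr) \in \overline{\F}_p.
\]
Because $p\nmid D$ forces $\delta\neq 0$, we have $\big(\tfrac{D}{p}\big)=1$ if and only if $\delta\in\F_p$.

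Next I would bring in the Frobenius $\phi(x)=x^p$ of $\overline{\F}_p/\F_p$. The rule $\bar\sigma\mapsto \phi\circ\bar\sigma$ permutes $\{\bar\sigma_1,\dots,\bar\sigma_n\}$, and for each factor $\F_{p^{f_i}}$ its $f_i$ embeddings into $\overline{\F}_p$ form a single Frobenius orbit of length $f_i$. Hence the induced permutation $\tau\in S_n$ has cycle type $(f_1,\dots,f_m)$. Applying $\phi$ to $\delta$ permutes the rows of the defining matrix, so
\[
\phi(\delta) \;=\; \mathrm{sgn}(\tau)\,\delta \;=\; \prod_{i=1}^m (-1)^{f_i-1}\,\delta \;=\; (-1)^{n-m}\,\delta.
\]
Thus $\delta\in\F_p$ iff $(-1)^{n-m}=1$, which is exactly $\big(\tfrac{D}{p}\big)=(-1)^{n-m}$.

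The main obstacle lies in the opening step: the standard fact that $p\nmid D$ is equivalent to the unramifiedness that makes $\mathcal{O}/p\mathcal{O}$ \'etale with $n$ distinct embeddings into $\overline{\F}_p$, compatible with reduction of the integer-ring trace. Once that foothold is secured, Theorem~\ref{ETStick} delivers the splitting $D\equiv\delta^2\pmod p$ via the trace formula, and the remainder of the proof is the formal Frobenius permutation calculation above.
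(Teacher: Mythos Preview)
The paper does not actually prove Theorem~\ref{Stick2Thm}. It is stated in the Introduction purely as background on Stickelberger's number-theoretic work, with the remark ``This result is well known in number theory. See, for example, \cite{Carlitz} and \cite{KK}.'' So there is no in-paper proof to compare your proposal against.

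That said, your argument is correct and is essentially the classical one (as in Carlitz's note): reduce the integral trace form modulo~$p$, factor the resulting Gram matrix via the $n$ embeddings $\bar\sigma_i\colon\mathcal{O}/p\mathcal{O}\to\overline{\F}_p$ to get $D\equiv\delta^2\pmod p$, and then compute $\phi(\delta)=\mathrm{sgn}(\tau)\,\delta$ where $\tau$ is the Frobenius permutation of the embeddings with cycle type $(f_1,\dots,f_m)$. The sign computation $\prod_i(-1)^{f_i-1}=(-1)^{n-m}$ finishes it.

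Two minor comments. First, your invocation of Theorem~\ref{ETStick} is pleasant thematically but not really needed: once you know $\mathcal{O}/p\mathcal{O}\cong\prod_i\F_{p^{f_i}}$ is a product of fields, the embeddings and the trace identity $\mathrm{Tr}(\bar\alpha)=\sum_i\bar\sigma_i(\bar\alpha)$ follow directly from the Chinese Remainder decomposition without passing through localizations or multiplicities. Second, the ``obstacle'' you flag---that $p\nmid D$ forces $p$ unramified and hence $\mathcal{O}/p\mathcal{O}$ \'etale---is indeed the one genuine input from algebraic number theory, and you are right to isolate it; the compatibility of the integral and mod-$p$ trace forms then follows from the fact that trace commutes with base change $\Z\to\F_p$.
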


This result is well known in number theory.  See, for example, \cite{Carlitz} and  
\cite{KK}.  But for our purposes, Stickelberger's most interesting theorem in \cite{S1897} involves the trace function of $ \mathcal{O}$ modulo an ideal $\mathfrak{a}$ containing a prime $p$. This is the map
\[
\mathrm{Tr}_\mathfrak{a}: \mathcal{O} \longrightarrow \F_p
\]
where multiplication by $\alpha \in \mathcal{O}$ gives a $\F_p$-linear map $m_\alpha: \mathcal{O}/\mathfrak{a} \to \mathcal{O}/\mathfrak{a}$ with trace
\[
\mathrm{Tr}_\mathfrak{a}(\alpha) = \mathrm{Tr}(m_\alpha) \in \F_p.
\]
When $\mathfrak{a} = p\mathcal{O}$, we write $\mathrm{Tr}_p(\alpha)$ instead of $\mathrm{Tr}_{p\mathcal{O}}(\alpha)$.  Here is Stickelberger's theorem:

\begin{theorem}[Theorem III of \cite{S1897}]
\label{StickTrace}
Let $p$ be prime with factorization $p\mathcal{O} = \mathfrak{p}^{e_1}_1\cdots \mathfrak{p}_m^{e_m}$, where $\mathfrak{p}_1,\dots, \mathfrak{p}_m$ are distinct primes.  Then for any $\alpha \in \mathcal{O}$, we have
\[
\mathrm{Tr}_p(\alpha) = \sum_{i=1}^m e_i \mathrm{Tr}_{\mathfrak{p}_i}(\alpha).
\]
\end{theorem}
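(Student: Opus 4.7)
The plan is to deduce Theorem~\ref{StickTrace} directly from the trace formula \eqref{mftrace}, applied to the finite-dimensional $\F_p$-algebra $A = \mathcal{O}/p\mathcal{O}$.  Although the Eigenvalue Theorem above is stated for quotients of polynomial rings, its proof works for any finite-dimensional commutative $F$-algebra, so I treat \eqref{mftrace} as available in this setting.

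First, I would apply the Chinese Remainder Theorem to obtain an isomorphism of $\F_p$-algebras $A \cong \prod_{i=1}^m \mathcal{O}/\mathfrak{p}_i^{e_i}$.  Each factor is a local Artinian ring with residue field $\kappa_i := \mathcal{O}/\mathfrak{p}_i$, a degree-$f_i$ extension of $\F_p$, and multiplication by $\alpha$ respects this product decomposition.

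Next, I would describe the maximal ideals of $\overline{A} = A \otimes_{\F_p} \overline{\F_p}$.  Since finite fields are perfect, $\kappa_i \otimes_{\F_p} \overline{\F_p}$ splits as a product of $f_i$ copies of $\overline{\F_p}$, one for each embedding $\sigma\colon \kappa_i \hookrightarrow \overline{\F_p}$.  Combined with the $\mathfrak{p}_i$-adic filtration of $\mathcal{O}/\mathfrak{p}_i^{e_i}$, this shows that each local factor $\mathcal{O}/\mathfrak{p}_i^{e_i} \otimes_{\F_p} \overline{\F_p}$ decomposes as a product of $f_i$ local rings $\overline{A}_{(i,\sigma)}$, each of $\overline{\F_p}$-dimension $e_i$.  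Thus the points of $\overline{A}$ are indexed by pairs $(i,\sigma)$ with $\mu(i,\sigma) = e_i$, and the value of $\alpha$ at $(i,\sigma)$ is $\sigma(\bar\alpha_i)$, where $\bar\alpha_i \in \kappa_i$ denotes the image of $\alpha$.  Substituting into \eqref{mftrace} gives
\[
\mathrm{Tr}_p(\alpha) = \sum_{i=1}^m \sum_{\sigma} e_i\, \sigma(\bar\alpha_i) = \sum_{i=1}^m e_i \sum_\sigma \sigma(\bar\alpha_i) = \sum_{i=1}^m e_i \, \mathrm{Tr}_{\kappa_i/\F_p}(\bar\alpha_i),
\]
and the inner trace equals $\mathrm{Tr}_{\mathfrak{p}_i}(\alpha)$ by definition of $\kappa_i$.

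The main obstacle is verifying that the local factors $\overline{A}_{(i,\sigma)}$ really do have $\overline{\F_p}$-dimension $e_i$.  This combines the Dedekind-domain property of $\mathcal{O}$ (so that the successive quotients $\mathfrak{p}_i^j/\mathfrak{p}_i^{j+1}$ are each one-dimensional over $\kappa_i$) with the separability of $\kappa_i/\F_p$.  If this seems heavier than necessary, a more elementary route works entirely over $\F_p$: the $\mathfrak{p}_i$-adic filtration of $\mathcal{O}/\mathfrak{p}_i^{e_i}$ is preserved by $m_\alpha$, each successive quotient is isomorphic to $\kappa_i$ as an $\mathcal{O}$-module, and additivity of the trace along short exact sequences of $\F_p$-vector spaces gives $\mathrm{Tr}(m_\alpha \vert_{\mathcal{O}/\mathfrak{p}_i^{e_i}}) = e_i\, \mathrm{Tr}_{\mathfrak{p}_i}(\alpha)$; summing over $i$ then completes the proof.
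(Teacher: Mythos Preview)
Your argument is correct, but the route differs from the paper's.  The paper does not deduce Theorem~\ref{StickTrace} by passing to $\overline{\F_p}$ and invoking the Eigenvalue-Theorem trace formula \eqref{mftrace}; instead it observes (in Section~4) that Theorem~\ref{StickTrace} is the special case $F = \F_p$, $A = \mathcal{O}/p\mathcal{O}$ of Theorem~\ref{ETSS} (Scheja--Storch's Lemma~94.6), whose proof stays entirely over the base field and uses the separable hull $A_{\mathrm{sep}} \simeq L_i$ inside each local factor.  In that setting the maximal ideals of $A$ are exactly the $\mathfrak{p}_i$, the residue fields $L_i = \mathcal{O}/\mathfrak{p}_i$ are separable since finite fields are perfect, and $\lambda_i = \dim_{\F_p}(\mathcal{O}/\mathfrak{p}_i^{e_i})/[L_i:\F_p] = e_i f_i/f_i = e_i$, so Theorem~\ref{ETSS} gives the result in one line.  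Your first approach trades the separable-hull lifting for a base change to $\overline{\F_p}$ and a Galois-orbit count of the points $(i,\sigma)$; this is equally valid but, as you yourself note, requires the extra check that each local piece $\overline{A}_{(i,\sigma)}$ has dimension~$e_i$ (obtainable either from Galois symmetry or from the filtration).  Your second, filtration-based approach is the most elementary of the three and arguably the closest in spirit to what Stickelberger himself had available in 1897.
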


Given the similarity to the trace formula \eqref{mftrace}, it becomes clear why Stickelberger's paper is relevant to the Eigenvalue Theorem.  The link was made explicit in 1988 when G\"unter Scheja and Uwe Storch published \emph{Lehrbuch der Algebra} \cite{SS}.  However, even though Scheja and Storch invoke Stickelberger's name, they do not refer to his 1897 paper \cite{S1897}.

In what follows, we will say more about Stickelberger and his mathematics in Section 2 and explore the history of the Eigenvalue Theorem in Section 3.  Section 4 will describe how Stickelberger and the Eigenvalue Theorem came together in 1988 under the influence of Scheja and Storch, and Section 5 will explain the unexpected role played by real solutions.  We end with some final remarks in Section~6

\section{Ludwig Stickelberger}

Ludwig Stickelberger was a Swiss mathematician born in 1850 in the canton of Schaffhausen and died in 1936 in Basel.  He got his PhD from Berlin in 1874 under the direction of Ernst Kummer and Karl Weierstrass.  After spending a few years at the forerunner of ETH in Z\"urich, Stickelberger went to the University of Freiburg in 1879.  He retired in 1919 but remained in Freiburg as an ``Honorarprofessor'' until 1924, when he returned to Switzerland.

Stickelberger's mathematical work is described in a 1937 article \cite{Hef} written by his Freiburg colleague Lothar Heffter.  Stickelberger's mathematical output was modest:\ besides his dissertation, he published 12 papers during his lifetime, four jointly written with Frobenius.  One unpublished manuscript from 1915 appeared posthumously in 1936.  Heffter gives a brief description of each paper in \cite{Hef}.

His papers cover a range of topics, including quadratic forms, real orthogonal transformations, differential equations, algebraic geometry, group theory, elliptic functions, and algebraic number theory.  Heffter comments that 
\begin{quote}
\dots\ he definitely adopted Gauss' point of view ``Pauca sed matura" [few but mature]. He recognized and filled essential gaps in fundamental theories, often having the last word with the keystone of a development that gives the theory its final, simplest form.
\end{quote}

Stickelberger's best known result, published in 1890 in \emph{Mathematische Annalen} \cite{S1890}, concerns an element $\theta$ in the group ring $\Q[G]$, where $G$ is the Galois group $\mathrm{Gal}(\Q(\zeta_m)/\Q) \simeq (\Z/m\Z)^\times$ of the cyclotomic extension $\Q \subseteq \Q(\zeta_m)$.  This gives the ideal
\[
I = (\theta \Z[G])\cap \Z[G] \subseteq \Z[G].
\]
It is customary to call $\theta$ the \emph{Stickelberger element} and $I$ the \emph{Stickelberger ideal}.  Here is his theorem:

\begin{theorem}[Stickelberger's Theorem \cite{S1890}]
\label{STThm}
The Stickelberger ideal $I$ annihilates the class group of $\Q(\zeta_m)$.
\end{theorem}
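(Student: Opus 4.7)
The plan is to show, for every prime ideal $\mathfrak{P}$ of $\Z[\zeta_m]$ and every $\beta\in I$, that the ideal $\mathfrak{P}^\beta$ is principal. A preliminary reduction lets us assume the residue characteristic $p$ of $\mathfrak{P}$ does not divide $m$: the ideals prime to $m$ generate the class group (any class contains representatives prime to $m$, since primes dividing $m$ are finite in number and can be absorbed into a principal ideal using powers of $\zeta_m-1$), so it suffices to annihilate the classes of such $\mathfrak{P}$.

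Next I would choose a convenient generating set for $I$. A direct computation with $\theta=\tfrac{1}{m}\sum_{a\in(\Z/m\Z)^\times} a\,\sigma_a^{-1}$ shows that for every integer $c$ coprime to $m$, the element $(c-\sigma_c)\theta$ lies in $\Z[G]$, because $\sigma_c\cdot \tfrac{a}{m} - \tfrac{ca \bmod m}{m}$ is always an integer. Moreover, these elements generate $I$: any $\gamma\in\Z[G]$ with $\gamma\theta\in\Z[G]$ can be written in terms of such differences via an elementary linear-algebra argument in $\Q[G]/\Z[G]$. So it is enough to show $\mathfrak{P}^{(c-\sigma_c)\theta}$ is principal for every $c$ coprime to $m$.

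The core of the proof is then the Gauss sum identity. Let $f$ be the residue degree of $p$ in $\Q(\zeta_m)/\Q$, $q=p^f$, and let $\chi:\mathbb{F}_\mathfrak{P}^\times\to\mu_m\subset\Q(\zeta_m)^\times$ be the character sending $x$ to $x^{(q-1)/m}$ modulo $\mathfrak{P}$. Form the Gauss sum
\[
g(\chi)=-\sum_{x\in\mathbb{F}_\mathfrak{P}^\times}\chi(x)\,\zeta_p^{\mathrm{Tr}_{\mathbb{F}_q/\mathbb{F}_p}(x)} \in \Z[\zeta_{mp}].
\]
The classical Stickelberger factorization (proved by computing $p$-adic valuations at each prime above $p$ using $p$-adic Gamma functions, or alternatively by Lagrange's congruence $g(\chi)\equiv-\chi(a)^{-1}(\zeta_p-1)^{w(a)}\pmod{\mathfrak{p}}$ and its conjugates) asserts
\[
(g(\chi))=\mathfrak{P}^{m\theta}
\]
as ideals of $\Z[\zeta_{mp}]$, where both sides are interpreted after extending $\mathfrak{P}$. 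Raising the norm relation $g(\chi)\overline{g(\chi)}=q$ and using Galois action to strip out the auxiliary $\zeta_p$, one sees that $g(\chi)^{c-\sigma_c}$ (or a suitable cousin of it, like $g(\chi)^c/g(\chi^c)$) actually lies in $\Q(\zeta_m)$, and its ideal is exactly $\mathfrak{P}^{(c-\sigma_c)\theta}$. This exhibits the required principal generator.

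The main obstacle is the Gauss sum factorization $(g(\chi))=\mathfrak{P}^{m\theta}$ itself: verifying the exponent at each Galois conjugate of $\mathfrak{P}$ is the technical heart and requires either Stickelberger's original congruence for $g(\chi)$ modulo $\mathfrak{p}^2$ or the modern $p$-adic approach through the Gross--Koblitz formula. Everything else---the reduction to primes away from $m$, the description of $I$ by the elements $(c-\sigma_c)\theta$, and the descent of the Gauss sum from $\Q(\zeta_{mp})$ to $\Q(\zeta_m)$---is comparatively formal once that factorization is in hand.
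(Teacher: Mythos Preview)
The paper does not prove this theorem. Theorem~\ref{STThm} is stated purely as historical background---it is Stickelberger's most famous result, from his 1890 \emph{Mathematische Annalen} paper---and is included only to explain why the label ``Stickelberger's Theorem'' is already occupied in most mathematicians' minds, and hence why attaching that name to the Eigenvalue Theorem invites confusion. The paper's own arguments concern the trace formula (Theorems~\ref{StickTrace} and~\ref{ETSS}) and its consequences for multiplication maps and real-root counting; nothing in the paper touches Gauss sums, factorization of primes in cyclotomic fields, or class groups. So there is no proof in the paper to compare your proposal against.

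That said, your outline is the standard textbook argument (essentially the proof in Washington or Lang), and the architecture is correct: reduce to primes $\mathfrak{P}\nmid m$, invoke the Stickelberger factorization $(g(\chi))=\mathfrak{P}^{m\theta}$ in $\Z[\zeta_{mp}]$, and descend to $\Q(\zeta_m)$ via the combinations $g(\chi)^c/g(\chi^c)$, whose ideals realize $\mathfrak{P}^{(c-\sigma_c)\theta}$. Two spots would need tightening in a written version. First, the reduction step is misphrased: ``absorbing primes dividing $m$ into a principal ideal using powers of $\zeta_m-1$'' is not the argument; the correct point is simply that every ideal class contains an integral representative coprime to $m$. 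Second, the assertion that the elements $(c-\sigma_c)\theta$ generate all of $I$ is true but is more than an ``elementary linear-algebra argument''; it requires a genuine computation (and is exactly what is needed, since showing a proper subideal annihilates would not suffice).
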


If you search MathSciNet for reviews that mention ``Stickelberger'' anywhere, the vast majority involve the Stickelberger element, the Stickelberger ideal, and their generalizations.   When mathematicans say ``Stickelberger's Theorem'', they are usually referring to Theorem~\ref{STThm}.  This is probably what led the authors of \cite{GPV} to cite \cite{S1890} as the source for their version of the Eigenvalue Theorem.

In 1897, Stickelberger published the paper \cite{S1897} discussed in the Introduction.  The main focus here is on properties of the discriminant $D$ of a number field $\Omega$.  Besides proving Theorem~\ref{Stick2Thm}, Stickelberger's results also imply that $D \equiv 0,1 \bmod 4$.  This standard fact appears in many textbooks on algebraic number theory (see, for example, Exericse 7 on p.\ 15 of \cite{Neukirch}, where the congruence is called \emph{Stickelberger's discriminant relation}).  

There are several ways to define $D$; the one most relevant to us uses a $\Z$-basis $\beta_1,\dots,\beta_n$ of the ring $\mathcal{O}$ of algebraic integers of $\Omega$.  The trace function $\mathrm{Tr} : \Omega \to \Q$ maps $\mathcal{O}$ to $\Z$.  Then the discriminant of $\Omega$ is defined to be
\[
D = \det(\mathrm{Tr}(\beta_i\beta_j)) \in \Z.
\]
Given this definition, it is not surprising that Stickelberger begins \cite{S1897} with some properties of traces.  He quickly gets to the trace formula given in Theorem~\ref{StickTrace}, which we propose calling the \emph{Stickelberger Trace Formula} to distinguish it from the more famous Stickelberger Theorem \ref{STThm}.  

In Section 4, we will explain carefully how the Stickelberger Trace Formula relates to the Eigenvalue Theorem.  But first, we need to learn more about the evolution of the Eigenvalue Theorem.

\section{The Eigenvalue Theorem}

A key feature of the Eigenvalue Theorem is that the quotient algebra $A = F[x_1,\dots,x_n]/\langle f_1,\dots,f_s\rangle$ is finite dimensional over $F$ when $f_1 = \cdots = f_s = 0$ has finitely many solutions over $\overline{F}$.  This was known by the end of the 1970s and is what allows us to use linear algebra to find solutions.  But getting from here to the Eigenvalue Theorems \ref{ETbasic} and \ref{ETStick} involved several independent discoveries, each with its own point of view.  In what follows, I will mention some but not all of the relevant papers. 

We begin in 1981 with Daniel Lazard's paper \emph{R\'esolutions des syst\`emes d'\'equations alg\'ebriques} \cite{Laz}, which gives an algorithm to solve a zero-dimensional system.  To relate his approach to ours, observe that setting $x = 0$ in \eqref{mfchpoly} gives the formula
\begin{equation}
\label{detmf}
\det(m_f) = \!\!\prod_{a \in  \mathbf{V}_{\overline{F}}(f_1,\dots,f_s)} \!\!f(a)^{\mu(a)}.
\end{equation}
For new variables $U_0,\dots,U_n$, let $L = U_0 + U_1 x_1 + \cdots + U_n x_n$.  Given a point $a = (a_1,\dots,a_n) \in \overline{F}^n$, applying $L$ to $a$ gives
\[
L(a) = U_0 + U_1 a_1 + \cdots + U_n a_n,
\]
from which we can recover $a$.   Thus, if we could somehow set $f = L$ in \eqref{detmf}, we would get
\begin{equation}
\label{lazard81}
\det(m_L) = \!\!\prod_{a \in  \mathbf{V}_{\overline{F}}(f_1,\dots,f_s)} \!\!L(a)^{\mu(a)},
\end{equation}
which would give the solutions and their multiplicities.

 In \cite{Laz}, Lazard describes an algorithm for computing a projective version of the right-hand side of \eqref{lazard81}.  He replaces $A$ with 
 \[
A_U = F[U_0,\dots,U_n,x_0,\dots,x_n]/\langle F_1,\dots,F_s\rangle,
 \]
where $F_i(x_0,\dots,x_n)$ is the homogenization of $f_i(x_1,\dots,x_n)$ and $L$ becomes $L =  U_0 x_0 + \cdots + U_n x_n$.  The ring $A_U$ is graded with respect to $x_0,\dots,x_n$, and multiplication by $L$ between graded pieces of $A_U$ appears explicitly in \S4 of \cite{Laz}.

The product in \eqref{lazard81} is an example of a $U$-resultant, and (very large) determinantal formulas for such resultants were known by the early 20th century.  Lazard's paper is important because of its efficient algorithm for computing this product.  For us, the key feature of \cite{Laz} is the use of a multiplication map on a quotient algebra.

The next advance came in 1988 with the paper \emph{An elimination
algorithm for the computation of all zeros of a system of multivariate polynomial equations} by Winfried Auzinger and Hans Stetter \cite{AS}.  For a system of $n$ equations in $x_1,\dots,x_n$, their initial is goal is to compute 
the right-hand side of \eqref{detmf} when $f = b_0 + b_1x_1 + \cdots + b_nx_n$.  Coming from a background in numerical analysis, they begin with the classical theory of resultants and describe an approach that works ``in the general case (without degeneracies)''.

In \S5 of \cite{AS},  Auzinger and Stetter construct matrices $B^{(k)}$, $k = 1,\dots,n$, whose eigenvalues are the $k$th coordinates of the solutions, together with simultaneous eigenvectors.  They also explain how these eigenvectors enable one to find the solutions.  Eigenvalues and eigenvectors finally take center stage!

For us, \S6  of \cite{AS} is the most interesting, for here, $B^{(k)}$ is interpreted as the matrix of the linear map $x_k : A \to A$ given by multiplication by $x_k$.   Then comes a key observation: while the treatment so far assumes that there are no degeneracies, one can avoid this assumption by  simply \emph{defining} $B^{(k)}$ to be the matrix of  multiplication by  $x_k$ on $A$.  Everything still works and we finally have the Eigenvalue Theorem!

A more complete treatment of this circle of ideas appears in the \emph{Central Theorem} (Theorem 2.27) in Stetter's 2004 book \emph{Numerical Polynomial Algebra} \cite{Stet}.  You can also read about this in \emph{Using Algebraic Geometry} \cite{UAG}, where \S2.4 discusses the Eigenvalue Theorem and the role of eigenvectors, and \S3.6 makes the link to resultants when there are no degeneracies.  We should also mention the 1992 paper  \emph{Solutions of systems of algebraic equations and linear maps on residue class rings} \cite{YNT} by Yokoyama, Noro and Takeshima that draws on ideas of Lazard,  Auzinger and Stetter, together with papers of Kobayashi.

In the Historical and Bibliographical Notes to Chapter 2 of \cite{Stet}, Stetter writes
\begin{quote}
The fundamental relation between the eigenelements of multiplication in the quotient ring and the zeros of the ideal must have been known to algebraists of the late 19th and early 20th centuries, in the language of the time.  \dots\  There are quotations of a theorem of Stickelberger from the 1920s, which is equivalent to Theorem 2.27, but its relevance remained concealed.  
\end{quote}
Sorting out what was known 100 years ago is not an easy task.  The only name mentioned by Stetter is our friend Stickelberger, though as we have seen, the date is 1897, not the 1920s.  

It is now time to turn to Stickelberger, even though the above discussion omits some important papers from the early 1990s that are relevant to the ideas behind the Eigenvalue Theorem.  We will consider this work in \S5 when we study real solutions of a polynomial system.  

\section{Scheja and Storch 1988}

In 1988, G\"unter Scheja and Uwe Storch published the two-volume algebra text \emph{Lehrbuch der Algebra}.  In Volume 2, \S94 deals with trace forms (Spurformen) and is where Stickelberger enters the picture:
\begin{quote}
{\bf Beispiel 7} (Die S\"atze von Stickelberger)
\end{quote}
(see \cite[p.\ 795]{SS}).  But before giving the theorems, they observe that
\begin{quote}
In some cases, the fine structure of the trace form of a finite free algebra can be described with the help of simple features of the algebra itself. 
\end{quote}
They begin with a ``simple lemma" that goes as follows.  Let $A$ be a finite-dimensional $F$-algebra with maximal ideals $\mathfrak{m}_1,\dots, \mathfrak{m}_r$.  The localizations $A_{\mathfrak{m}_i}$ have residue fields $L_i \simeq A/\mathfrak{m}_i$ and satisfy
\[
A \simeq \prod_{i=1}^r A_{\mathfrak{m}_i}.
\]
For each $i$, define $\lambda_i$ by the equation
\begin{equation}
\label{lambdaidef}
\dim_F A_{\mathfrak{m}_i} = \lambda_i [L_i : F]
\end{equation}
Note also that $\alpha \in A$ gives $F$-linear multiplication maps $m_\alpha : A \to A$ and $m_\alpha : L_i \to L_i$.  

\begin{theorem}[Lemma 94.6 in \cite{SS}]
\label{ETSS}
Assume that $L_i$ is a separable extension of $F$ for $1 \le i \le r$.  Then for $\alpha \in A$, the multiplication maps $m_\alpha$ defined above satisfy
\[
\mathrm{Tr}_A(m_\alpha) = \sum_{i=1}^r \lambda_i \mathrm{Tr}_{L_i}(m_\alpha).
\]
\end{theorem}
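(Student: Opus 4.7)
The plan is to exploit the product decomposition $A \simeq \prod_{i=1}^r A_{\mathfrak{m}_i}$ to reduce to a single local factor, and then to use the $\mathfrak{n}$-adic filtration on that factor to express the $F$-trace as a sum over successive quotients, each of which is an $L_i$-vector space on which $m_\alpha$ acts as a scalar. Because trace is additive under direct products of linear endomorphisms, the decomposition immediately gives
\[
\mathrm{Tr}_A(m_\alpha) = \sum_{i=1}^r \mathrm{Tr}_{A_{\mathfrak{m}_i}}(m_\alpha),
\]
so it suffices to establish, for each $i$, the local identity $\mathrm{Tr}_{A_{\mathfrak{m}_i}}(m_\alpha) = \lambda_i\,\mathrm{Tr}_{L_i}(m_\alpha)$.

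For this, I fix $i$ and set $B = A_{\mathfrak{m}_i}$, $\mathfrak{n} = \mathfrak{m}_i A_{\mathfrak{m}_i}$, and $L = L_i$. Since $B$ is a local Artinian $F$-algebra, the $\mathfrak{n}$-adic filtration $B \supseteq \mathfrak{n} \supseteq \mathfrak{n}^2 \supseteq \cdots \supseteq \mathfrak{n}^k = 0$ terminates, and each quotient $V_j = \mathfrak{n}^j/\mathfrak{n}^{j+1}$ is a finite-dimensional $L$-vector space of some dimension $d_j$. The map $m_\alpha$ preserves this filtration, and the $F$-trace is additive over successive quotients, so
\[
\mathrm{Tr}_B(m_\alpha) = \sum_{j \ge 0} \mathrm{Tr}_F(m_\alpha|_{V_j}).
\]
On $V_j$, the map $m_\alpha$ is $L$-linear and coincides with multiplication by the image $\bar\alpha \in L$ of $\alpha$. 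Writing $V_j$ in an $L$-basis refined by an $F$-basis of $L$ shows that $\mathrm{Tr}_F(m_\alpha|_{V_j}) = d_j \cdot \mathrm{Tr}_{L/F}(\bar\alpha) = d_j \cdot \mathrm{Tr}_L(m_\alpha)$. Comparing $F$-dimensions in the associated graded gives $\sum_j d_j \cdot [L:F] = \dim_F B = \lambda_i [L:F]$ by \eqref{lambdaidef}, so $\sum_j d_j = \lambda_i$; summing over $j$ completes the local identity, and summing over $i$ completes the proof.

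The main point that I would want to double-check is the role of the separability hypothesis, which is the only non-obvious ingredient in the statement. Inspecting the steps above, none of them actually requires $L_i/F$ to be separable: the termination of the $\mathfrak{n}$-adic filtration follows purely from $B$ being Artinian local, and the identity $\mathrm{Tr}_F(c \cdot \mathrm{id}_V) = \dim_L(V) \cdot \mathrm{Tr}_{L/F}(c)$ for $V$ an $L$-vector space is elementary linear algebra valid in any characteristic. My best guess is that Scheja and Storch include separability because it is the natural context of their section on trace forms (it is equivalent to nondegeneracy of the trace pairing on $L_i$, and hence, when all $\lambda_i = 1$, on $A$ itself), rather than because it is genuinely needed in this particular lemma.
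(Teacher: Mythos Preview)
Your proof is correct, and you are right that separability is not needed for the trace identity itself.  The paper's proof, however, takes a genuinely different route in the local case.  Rather than pass to the associated graded of the $\mathfrak{n}$-adic filtration, the paper uses the separable hull $A_{\mathrm{sep}} \subseteq A$: under the separability hypothesis, the composite $A_{\mathrm{sep}} \hookrightarrow A \twoheadrightarrow L$ is an isomorphism, so $L$ lifts to a subfield of $A$ and $A$ becomes an $L$-vector space of dimension $\lambda$.  For $\alpha \in A_{\mathrm{sep}}$ the trace formula is then immediate from the block decomposition $A = \bigoplus_k A_{\mathrm{sep}}\beta_k$, and for general $\alpha$ one writes $\alpha = \alpha' + \beta$ with $\alpha' \in A_{\mathrm{sep}}$, $\beta \in \mathfrak{m}$ nilpotent, and uses that $m_\beta$ has trace zero on both $A$ and $L$.

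The two arguments thus trade off in an interesting way.  Your filtration argument is more elementary and, as you observe, more general: it makes no use of separability and shows that the hypothesis is superfluous for the trace identity as stated.  The paper's argument actually \emph{uses} separability, via the lifting $L \simeq A_{\mathrm{sep}} \subseteq A$ (essentially a baby case of the Cohen structure theorem), and this structural fact---that $A$ is itself an $L$-algebra, not merely that its associated graded is---is what Scheja and Storch want for the surrounding discussion of trace forms.  Your diagnosis of why the hypothesis is present is therefore exactly right: separability governs the nondegeneracy of $T_{L_i}$ and hence of $T_A$, and that is the real point of \S94, even though the bare trace formula survives without it.
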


\begin{proof}
Since  $A \simeq A_{\mathfrak{m}_1} \times \cdots \times A_{\mathfrak{m}_r}$, we can reduce to the case where $A$ is local with maximal ideal $\mathfrak{m}$ and residue field $L = A/\mathfrak{m}$. Then \eqref{lambdaidef} can be written
\[
\dim_F A = \lambda \hskip1pt [L:F].
\]
In the $F$-algebra $A$, the separable hull $A_{\mathrm{sep}} \subseteq A$ consists of all elements of $A$ whose minimal polynomial over $F$ is separable.  Since $F \subseteq L$ is separable by hypothesis, the composition
\[
A_{\mathrm{sep}} \longhookrightarrow A \longrightarrow A/\mathfrak{m} = L
\]
is an isomorphism of fields by Corollary 91.14 of \cite{SS}.  

Thus $A$ is a vector space over $A_{\mathrm{sep}}$ of dimension $\lambda$.
 A basis $\{\beta_1,\dots,\beta_\lambda\}$ of $A$ over $A_{\mathrm{sep}}$ gives a direct sum
\[
A = A_{\mathrm{sep}}\beta_1 \oplus \cdots \oplus A_{\mathrm{sep}}\beta_\lambda.
\]
To compute the trace of $m_\alpha : A \to A$ over $F$, first assume $\alpha \in A_{\mathrm{sep}}$.  Then $m_\alpha$ is compatible with the direct sum  decomposition, which easily implies 
\[
\mathrm{Tr}_{A}(m_\alpha) = \lambda \mathrm{Tr}_{A_{\mathrm{sep}}}(m_\alpha).
\]
Via the isomorphism $A_{\mathrm{sep}} \simeq L$, this becomes
\begin{equation}
\label{lambdaidef2}
\mathrm{Tr}_{A}(m_\alpha) = \lambda \mathrm{Tr}_{L}(m_\alpha).
\end{equation}

Now suppose $\alpha \in A$ is arbitrary.  Since  $A_{\mathrm{sep}} \simeq L = A/\mathfrak{m}$, there is $\alpha' \in A_{\mathrm{sep}}$ such that $\alpha = \alpha' + \beta$ with $\beta \in \mathfrak{m}$.  Note that $\mathfrak{m}$ is nilpotent since $A$ is finite-dimensional over $F$.  Then $m_\alpha = m_{\alpha'} + m_\beta$ implies
\begin{align*}
\mathrm{Tr}_{A}(m_\alpha) &= \mathrm{Tr}_{A}(m_{\alpha'}) + \mathrm{Tr}_{A}(m_\beta)\\ &= \lambda \mathrm{Tr}_{L}(m_{\alpha'}) + 0\\
&= \lambda \mathrm{Tr}_{L}(m_{\alpha'}) + \lambda \mathrm{Tr}_{L}(m_\beta) = \lambda \mathrm{Tr}_{L}(m_\alpha),
\end{align*}
where the second line follows from \eqref{lambdaidef2} with $\alpha'$ and the fact that $m_\beta$ is nilpotent, and the third line follows since $m_\beta$ is the zero map on $L$.  We conclude that \eqref{lambdaidef2} holds for all $\alpha \in A$, and the theorem follows.
\end{proof}

A key feature of the proof is that everything becomes clear once we understand the structure of $A$, i.e., the ``simple features of the algebra itself".

Scheja and Storch then use Theorem \ref{ETSS} to prove various results of Stickelberger, including the Stickelberger Trace Formula given in Theorem~\ref{StickTrace}.  For this reason, it makes sense to call Theorem \ref{ETSS} the \emph{Stickelberger Trace Formula} in honor of Stickelberger's contribution.

Here is an easy consequence of Theorem \ref{ETSS}.

\begin{corollary}
\label{StickTrThm}
With the notation and assumptions of Theorems \ref{ETbasic} and \ref{ETStick}, we have
\[
\mathrm{Tr}(m_f) = \!\!\sum_{a \in  \mathbf{V}_{\overline{F}}(f_1,\dots,f_s)} \!\!\mu(a) f(a).
\]
\end{corollary}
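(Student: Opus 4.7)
The plan is to deduce the corollary from Theorem~\ref{ETSS} after extending scalars to the algebraic closure. The first step is to observe that the trace of $m_f$ is unchanged by base change: if $M$ is the matrix of $m_f: A\to A$ in some $F$-basis of $A$, then the same matrix represents $m_f: \overline{A}\to\overline{A}$ in the induced $\overline{F}$-basis of $\overline{A} = A\otimes_F \overline{F}$, so $\mathrm{Tr}_A(m_f) = \mathrm{Tr}_{\overline{A}}(m_f)$.

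Next I would identify the structure of $\overline{A}$ as an $\overline{F}$-algebra. Since $\dim_F A < \infty$ by hypothesis, $\overline{A}$ is a finite-dimensional $\overline{F}$-algebra, and the decomposition recalled in the introduction identifies its maximal ideals with the points $a\in \mathbf{V}_{\overline{F}}(f_1,\dots,f_s)$. For each such $a$, the residue field $L_a = \overline{A}/\mathfrak{m}_a$ is a finite extension of the algebraically closed field $\overline{F}$, hence $L_a = \overline{F}$. In particular $[L_a:\overline{F}] = 1$ and $L_a$ is (trivially) separable over $\overline{F}$, so the hypotheses of Theorem~\ref{ETSS} are satisfied with base field $\overline{F}$. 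Moreover, definition \eqref{lambdaidef} becomes
\[
\lambda_a = \dim_{\overline{F}} \overline{A}_a = \mu(a).
\]

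Finally, the multiplication map $m_f: L_a \to L_a$ is just multiplication by the image of $f$ in $L_a = \overline{F}$, which is $f(a)$, so $\mathrm{Tr}_{L_a}(m_f) = f(a)$. Theorem~\ref{ETSS} applied to $\overline{A}$ over $\overline{F}$ then yields
\[
\mathrm{Tr}_A(m_f) \,=\, \mathrm{Tr}_{\overline{A}}(m_f) \,=\, \sum_{a \in \mathbf{V}_{\overline{F}}(f_1,\dots,f_s)} \lambda_a\, \mathrm{Tr}_{L_a}(m_f) \,=\, \sum_{a \in \mathbf{V}_{\overline{F}}(f_1,\dots,f_s)} \mu(a)\, f(a),
\]
which is the desired formula. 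The only real content beyond Theorem~\ref{ETSS} is the translation between its abstract data $(L_i, \lambda_i)$ and the geometric data $(\overline{F}, \mu(a))$; there is no serious obstacle, since passing to $\overline{F}$ makes every residue field trivial and forces $\lambda_a$ to coincide with the multiplicity $\mu(a)$.
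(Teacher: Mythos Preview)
Your proof is correct and follows essentially the same route as the paper: apply Theorem~\ref{ETSS} to $\overline{A}$ over $\overline{F}$, identify the maximal ideals with the points $a\in\mathbf{V}_{\overline{F}}(f_1,\dots,f_s)$, note that each residue field is $\overline{F}$ so that $\mathrm{Tr}_{L_a}(m_f)=f(a)$ and $\lambda_a=\mu(a)$, and conclude. You are slightly more explicit than the paper in justifying that the trace is unchanged under the base change $A\to\overline{A}$ and in checking the separability hypothesis, but the argument is the same.
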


\begin{proof} Maximal ideals  $\mathfrak{m}_1,\dots, \mathfrak{m}_r$ of $\overline{A} = A \otimes_F \overline{F}$ 
correspond to solutions $a_1,\dots,a_r$ in $\mathbf{V}_{\overline{F}}(f_1,\dots,f_s)$.  In 
the notation of Section 1, the localization $\overline{A}_{a_i}$ has residue field
$L_i \simeq \overline{F}$ via the map $\overline{A}_{a_i} \to \overline{F}$ defined by $f \mapsto f(a_i)$.  It follows that $\mathrm{Tr}_{L_i}(m_f) = f(a_i)$.  Furthermore, for $\overline{A}_{a_i}$, the $\lambda_i$ in \eqref{lambdaidef} is multiplicity $\mu(a_i)$.  Then the desired formula for $\mathrm{Tr}(m_f)$ is an immediate consequence of Theorem \ref{ETSS}.
\end{proof}

Corollary~\ref{StickTrThm} is the trace formula \eqref{mftrace} from the Introduction.  There, we deduced  \eqref{mftrace} from the version of ``Stickelberger's Theorem'' given in \cite{tapas}.  We now see how this follows from Theorem~\ref{ETSS}, which is Scheja and Strorch's version of the actual Stickelberger Trace Formula from 1897.

This is nice, but where are the eigenvalues?  After all, our main concern is the relation between Stickelberger and the Eigenvalue Theorem.  Fortunately, the trace formula given in Corollary~\ref{StickTrThm} is powerful enough to determine the eigenvalues of $m_f$ when $F$ has characteristic zero.  Here is the precise result:

\begin{proposition}
\label{TrET}
With the notation and assumptions of Theorems \ref{ETbasic} and \ref{ETStick}, the following are equivalent when  $\mathrm{char}(F) = 0${\rm:}
\begin{enumerate}
\item For every $f \in F[x_1,\dots,x_n]$, 
\[
\mathrm{Tr}(m_f) = \!\!\sum_{a \in  \mathbf{V}_{\overline{F}}(f_1,\dots,f_s)} \!\!\mu(a) f(a).
\]
\item For every $f \in F[x_1,\dots,x_n]$, 
\[
\det(m_f-x\hskip1pt I) = \!\!\prod_{a \in  \mathbf{V}_{\overline{F}}(f_1,\dots,f_s)} \!\!(f(a) -x)^{\mu(a)}.
\]
\end{enumerate}
\end{proposition}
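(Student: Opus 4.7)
The plan is to prove (2) $\Rightarrow$ (1) as an immediate reading of coefficients, and to reduce (1) $\Rightarrow$ (2) to Newton's identities applied to power sums. Throughout, let $d = \dim_F A$, noting that $d = \sum_{a \in \mathbf{V}_{\overline{F}}(f_1,\dots,f_s)} \mu(a)$ and that traces and characteristic polynomials of $m_f$ are unchanged when we pass from $A$ to $\overline{A}$.

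For (2) $\Rightarrow$ (1), I would expand the right-hand side of (2) as a polynomial in $x$: the product factors as a polynomial of degree $d$ with roots $f(a)$ of multiplicity $\mu(a)$, and the trace of $m_f$ is (up to sign) the coefficient of $x^{d-1}$ in $\det(xI-m_f)$, which equals $\sum_a \mu(a)f(a)$.

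For the main direction (1) $\Rightarrow$ (2), the key observation is that $m_f^k = m_{f^k}$, so applying (1) with $f$ replaced by $f^k$ gives
\[
\mathrm{Tr}(m_f^k) \;=\; \mathrm{Tr}(m_{f^k}) \;=\; \sum_{a} \mu(a)\hskip1pt f(a)^k
\]
for every $k \ge 1$. Let $\lambda_1,\dots,\lambda_d$ be the eigenvalues of $m_f$ (over $\overline{F}$, counted with algebraic multiplicity); then $\mathrm{Tr}(m_f^k) = \sum_i \lambda_i^k$. Thus the two multisets $\{\lambda_1,\dots,\lambda_d\}$ and $\{f(a)\ \text{with multiplicity}\ \mu(a)\}_a$ have the same size $d$ and identical power sums $p_k$ for all $k \ge 1$.

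The main (and only nontrivial) obstacle is passing from equality of power sums to equality of the characteristic polynomials. Here I would invoke Newton's identities: in characteristic zero, the elementary symmetric polynomials $e_1,\dots,e_d$ of $d$ quantities are determined by the power sums $p_1,\dots,p_d$ through a triangular linear system whose diagonal entries are nonzero integers. Hence the two multisets have identical elementary symmetric functions, and therefore identical monic polynomials $\prod_i (x-\lambda_i) = \prod_a (x-f(a))^{\mu(a)}$. Multiplying both sides by $(-1)^d$ gives $\det(m_f - xI) = \prod_a (f(a)-x)^{\mu(a)}$, which is (2). The hypothesis $\mathrm{char}(F)=0$ enters exactly in this last step, where the invertibility of the integer coefficients arising in Newton's identities is needed.
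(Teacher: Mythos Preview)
Your argument is correct and follows essentially the same route as the paper: for (1) $\Rightarrow$ (2) you apply (1) to $f^k$ and use $m_{f^k}=m_f^k$ to match all power sums, then invoke Newton's identities in characteristic zero to recover the characteristic polynomial. The paper packages the target multiset as a diagonal matrix $M$ rather than speaking of multisets directly, but the content is identical.
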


\begin{proof} We proved (2) $\Rightarrow$ (1) in the discussion leading up to \eqref{mftrace}.  As for (1) $\Rightarrow$ (2), let $M$ be the diagonal matrix whose diagonal entries are $f(a)$ repeated $\mu(a)$ times, for each $a \in  \mathbf{V}_{\overline{F}}(f_1,\dots,f_s)$.  Then for any integer $\ell \ge 0$, we have
\[
\mathrm{Tr}(M^\ell) = \!\!\sum_{a \in  \mathbf{V}_{\overline{F}}(f_1,\dots,f_s)} \!\!\mu(a) f(a)^\ell = \mathrm{Tr}(m_{f^\ell}) = \mathrm{Tr}((m_f)^\ell),
\]
where the second equality uses (1) with $f^\ell$ and the third equality follows from $m_{fg} = m_f \circ m_g$.  Thus $M^\ell$ and $(m_f)^\ell$ have the same trace for all $\ell \ge 0$.  

It has been known since 1840 that in characteristic zero, the characteristic polynomial of a matrix is determined by the traces of its powers (a formula for the coefficients in terms of the traces is given in \cite{Lewin})\footnote{If $\mathrm{char}(F) = p > 0$, then $A = \lambda I_p$ has $\mathrm{Tr}(A^\ell) = 0$ for all $\ell \ge 0$, independent of $\lambda$, while $\det(A-x\hskip1pt I) = (\lambda - x)^p = \lambda^p - x^p$.}.   Thus the previous paragraph implies that $M$ and $m_f$ have the same characteristic polynomial, and (2) follows immediately.
\end{proof}

We now have a direct path from Stickelberger to the characteristic zero version of the Eigenvalue Theorem.  Our final task is to explore how Stickelberger's name began to appear in the literature following the 1988 publication of Scheja and Storch's book \cite{SS}.  

In Theorem~\ref{ETStick},  we stated ``Stickelberger's Theorem'' from the 1999 book chapter \cite{tapas} by Gonzalez-Vega, Roullier and Roy.  Their Corollary 3.6 states the formulas for the trace, determinant and characteristic polynomial of $m_f$ given in \eqref{mftrace}, \eqref{detmf} and \eqref{mfchpoly} respectively.  Not surprisingly, there is no reference to Stickelberger.  Nor is there a reference to Scheja and Storch!  

However, there is a reference to the 1995 paper \cite{GVT} by Gonzalez-Vega and Trujillo, which includes the following result (reproduced verbatim):
 
\medskip

\noindent {\bf Theorem 1. (Stickelberger Theorem)} \emph{Let $\mathbb{K} \subset \F$ be a field extension with $\F$ algebraically closed, $h \in \mathbb{K}[\underline{x}]$ and $J$ be a zero dimensional ideal in $ \mathbb{K}[\underline{x}]$.  If $\mathcal{V}_\F(J) = \{\varDelta_1,\dots,\varDelta_s\}$ are the zeros in $\F^n$ of $J$ then there exists a basis of  $\F[\underline{x}]/J$ such that the matrix of $\mathrm{M}_h$, with respect to this basis, has the following block structure:
\[
\begin{pmatrix} \mathbb{H}_1 & 0 & \dots & 0\\
 0 & \mathbb{H}_2  & \dots & 0\\ \vdots & \vdots && \vdots\\
0 & 0 & \dots &  \mathbb{H}_s \end{pmatrix} \qquad \text{where} \qquad \mathbb{H}_i = \begin{pmatrix} h(\varDelta_i) & \star & \dots & \star\\
 0 & h(\varDelta_i)  & \dots & \star\\ \vdots & \vdots && \vdots\\
0 & 0 & \dots &  h(\varDelta_i) \end{pmatrix}
\]
The dimension of the $i$-th submatrix is equal to the multiplicity of $\varDelta_i$ as a zero of the ideal $J$.}
\medskip

As far as I know, this is the first explicit mention of ``Stickelberger's Theorem'' in the literature.  As usual, Stickelberger does not appear in the references to \cite{GVT}, and there is also no reference to Scheja and Storch.  To see why, we look to Trujillo's 1997 PhD thesis \cite{trujillo}. She states a version of  Theorem 1 and says:
 \begin{quote}
The version presented here was introduced by L. Stickelberger ([SS88]) in 1930
\end{quote}
So we have a direct link between ``Stickelberger's Theorem'' and Scheja and Storch, though the date 1930 is not correct.

Trujillo also notes that this result was rediscovered independently in 1991 by Pedersen, Roy and Szpirglas (see \cite{PRS}) and by Becker and W\"ormann (see \cite{BW1}).  The references to \cite{tapas} and \cite{GVT} cite these authors.  Hence we need to examine  \cite{PRS} and \cite{BW1}.   These papers deal with solutions over $\R$, which leads to our next topic. 

\section{Counting Real Solutions}

Given a finite-dimensional $F$-algebra $A$, multiplication by $\alpha \in A$ gives a $F$-linear map $m_\alpha : A \to A$ as usual.  In \S94 of \cite{SS}, Scheja and Storch define the \emph{trace form} to be the symmetric bilinear form $T_A$ on $A$ defined by
\[
T_A(\alpha,\beta) = \mathrm{Tr}(m_{\alpha\beta}) \in F.
\]
When $F = \R$, the \emph{type} of $T_A$ is $(p,q)$, where $p = \#$ positive eigenvalues and  $q = \#$ negative eigenvalues, and the \emph{signature} is $\sigma(T_A) = p-q$.  Scheja and Storch apply the Stickelberger Trace Formula (Theorem~\ref{ETSS}) to determine the type of $T_A$:

\begin{theorem}[Theorem 94.7 of \cite{SS}]
\label{SSoverreal}
If $A$ is a finite-dimensional $\R$-algebra, then the trace form $T_A$ has type $(r_1+r_2,r_2)$, where $r_1$ {\rm(}resp.\ $r_2${\rm)} is the number maximal ideals $ \mathfrak{m} \subseteq A$ with quotient  $A/\mathfrak{m} \simeq \R$ {\rm(}resp.\ $\C${\rm)}.  
\end{theorem}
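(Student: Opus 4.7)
The plan is to exploit the product decomposition $A \simeq \prod_{i=1}^r A_{\mathfrak{m}_i}$ into local factors and then invoke the Stickelberger Trace Formula (Theorem \ref{ETSS}) to reduce the signature computation to that of the trace forms of the residue fields $L_i = A/\mathfrak{m}_i$. Since every finite extension of $\R$ is either $\R$ or $\C$, each $L_i$ belongs to exactly one of the two classes, and summing contributions should yield the total type $(r_1 + r_2, r_2)$.

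First I would observe that the product decomposition is orthogonal with respect to $T_A$: if $\alpha$ and $\beta$ are supported in distinct factors, then $\alpha\beta = 0$, so $T_A(\alpha,\beta) = \mathrm{Tr}(m_{\alpha\beta}) = 0$. Hence $T_A$ is an orthogonal direct sum of the forms $T_{A_{\mathfrak{m}_i}}$, and its type is the sum of the types of the pieces, so it suffices to handle each local factor separately.

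For a single local factor $B = A_{\mathfrak{m}_i}$ with maximal ideal $\mathfrak{n}$, residue field $L = L_i$, and multiplicity $\lambda = \lambda_i$, I would borrow the separable hull step from the proof of Theorem \ref{ETSS}: because $L/\R$ is separable, there is a subring $B_{\mathrm{sep}} \subseteq B$ mapping isomorphically onto $L$, yielding an $\R$-vector space decomposition $B = L \oplus \mathfrak{n}$. For any $n \in \mathfrak{n}$ and any $\beta \in B$, the product $n\beta$ lies in the nilpotent ideal $\mathfrak{n}$, so $m_{n\beta}$ is nilpotent and $T_B(n,\beta) = 0$. Thus $\mathfrak{n}$ sits in the radical of $T_B$, and the signature of $T_B$ coincides with that of its restriction to $L$. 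On $L$, Theorem \ref{ETSS} gives $T_B(\alpha,\beta) = \mathrm{Tr}_B(m_{\alpha\beta}) = \lambda \mathrm{Tr}_L(m_{\alpha\beta}) = \lambda T_L(\alpha,\beta)$, and since $\lambda > 0$ this scaled form has the same type as $T_L$ itself.

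A direct computation of $T_\R$ and $T_\C$ then finishes the proof. For $L = \R$ the form has matrix $(1)$ on the basis $\{1\}$, giving type $(1,0)$; for $L = \C$ the $\R$-basis $\{1,\sqrt{-1}\}$ yields $T_\C(1,1) = 2$, $T_\C(1,\sqrt{-1}) = 0$, and $T_\C(\sqrt{-1},\sqrt{-1}) = \mathrm{Tr}(m_{-1}) = -2$, so $T_\C$ is represented by $\mathrm{diag}(2,-2)$ and has type $(1,1)$. Summing $r_1$ copies of $(1,0)$ and $r_2$ copies of $(1,1)$ yields the total type $(r_1 + r_2, r_2)$. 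The main obstacle is the radical step for the local factors, which relies on the finite-dimensionality input that $\mathfrak{n}$ is nilpotent and on the separable hull lemma (Corollary 91.14 of \cite{SS}) lifting $L$ back into $B$; once these structural facts are in place, the rest is bookkeeping, and the key qualitative takeaway is that each local factor contributes a signature depending only on its residue field, not on $\lambda_i$.
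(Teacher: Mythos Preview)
Your proof is correct and follows essentially the same approach as the paper: both reduce to the residue fields via Theorem~\ref{ETSS}, compute $T_\R$ and $T_\C$ explicitly on the bases $\{1\}$ and $\{1,\sqrt{-1}\}$, and note that the positive scalars $\lambda_i$ do not affect the type. Your version simply unpacks more explicitly what the paper encodes in the single line $T_A = \sum_i \lambda_i T_{L_i}$---namely the orthogonality of the local factors and the fact that each $\mathfrak{n}$ lies in the radical of $T_B$---which is exactly why the paper's diagonal representative has ``possibly many zero entries.''
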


\begin{proof}
For maximal ideals $\mathfrak{m}_1,\dots, \mathfrak{m}_r$ of $A$ with quotients $L_1,\dots,L_r$, Theorem~\ref{ETSS} implies that 
\begin{equation}
\label{TAformula}
T_A = \sum_{i=1}^r \lambda_i T_{L_i}.
\end{equation}
Using the bases $\{1\}$ of $\R \subseteq \R$ and $\{1,\sqrt{-1}\}$ of $\R \subseteq \C$, one easily computes that 
\[
\text{matrix of } T_{L_i} = \begin{cases} \quad\  (1) & L_i = \R \quad \text{(happens $r_1$ times)}\\ \begin{pmatrix} 2 \!& \phantom{-}0\\ 0 \!&-2\end{pmatrix} &  L_i = \C\quad \text{(happens $r_2$ times)}.\end{cases}
\]
Since $\lambda_i > 0$ for all $i$, \eqref{TAformula} implies that $T_A$ is represented by a diagonal matrix with $r_1+r_2$ positive entries, $r_2$ negative entries, and possibly many zero entries.  The theorem follows.
\end{proof}

Over $\R$, there is a bijective correspondence between symmetric bilinear forms and quadratic forms.  Thus one can speak of the type and signature of a quadratic form.  In what follows, the quadratic form associated to $T_A$ will be denoted $Q_A$, so
\[
Q_A(\alpha) = T_A(\alpha,\alpha) = \mathrm{Tr}(m_{\alpha^2}).
\]

An immediate consequence of Theorem~\ref{SSoverreal} is the following wonderful result about real solutions of a zero-dimensional polynomial system over $\R$.  

\begin{corollary}
\label{SSrealsolsI}
Assume $\langle f_1, \dots, f_s\rangle \subseteq \R[x_1,\dots,x_n]$ is a zero-dimensional ideal.  Set $A = \R[x_1,\dots,x_n]/\langle f_1, \dots, f_s\rangle$ and let
\[
S = \{a \in \R^n \mid  f_1(a) =  \cdots= f_s(a) = 0\}
\]
be the set of real solutions of $f_1 = \cdots = f_s = 0$.  Then the quadratic form $Q_A$ has signature 
\[
\sigma(Q_A) = \#S = \text{the number of real solutions}.
\]
\end{corollary}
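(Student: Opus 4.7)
The plan is to apply Theorem~\ref{SSoverreal} directly to the finite-dimensional $\R$-algebra $A$ and then identify the invariant $r_1$ with $\#S$. Because $\langle f_1,\dots,f_s\rangle$ is zero-dimensional, $A$ is finite-dimensional over $\R$, so Theorem~\ref{SSoverreal} tells us that $T_A$ has type $(r_1+r_2,\, r_2)$, whence $\sigma(T_A) = (r_1+r_2) - r_2 = r_1$. Since $Q_A$ is the quadratic form associated to the symmetric bilinear form $T_A$ over $\R$, Sylvester's law of inertia gives $\sigma(Q_A) = \sigma(T_A) = r_1$, so the task reduces to showing $r_1 = \#S$.

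For the identification $r_1 = \#S$, I would use the Nullstellensatz (or simply the fact that any finite extension of $\R$ is $\R$ or $\C$) to match maximal ideals $\mathfrak{m} \subseteq A$ with residue field $A/\mathfrak{m} \simeq \R$ to real solutions. An $\R$-algebra homomorphism $A \to \R$ is nothing but evaluation $g \mapsto g(a)$ at some $a \in \R^n$, and such evaluation is well defined on $A$ precisely when $f_1(a) = \cdots = f_s(a) = 0$, i.e., when $a \in S$. Distinct points of $S$ yield distinct kernels (the coordinate functions separate them), and conversely every maximal ideal with residue field $\R$ is realized by its unique evaluation representative. Hence the maximal ideals contributing to $r_1$ are in bijection with $S$, so $r_1 = \#S$ and the corollary follows. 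One can also note in passing that the remaining maximal ideals, with residue field $\C$, correspond to complex-conjugate pairs of non-real solutions in $\mathbf{V}_\C(f_1,\dots,f_s)$, but that information plays no role in the signature count.

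The real content has already been extracted in Theorem~\ref{SSoverreal}, which itself rests on the Stickelberger Trace Formula, so there is no substantive obstacle here: everything is a routine identification. The one place worth being explicit is the passage from $\sigma(T_A)$ to $\sigma(Q_A)$, since the type was defined via the bilinear form, and the second is where the notion of ``signature of a quadratic form'' needs to be reconciled with the bilinear version through diagonalization over $\R$.
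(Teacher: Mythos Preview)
Your proof is correct and follows essentially the same route as the paper: apply Theorem~\ref{SSoverreal} to get type $(r_1+r_2,r_2)$, hence signature $r_1$, and then identify $r_1$ with $\#S$ via the bijection between maximal ideals with residue field $\R$ and real solutions. The paper is terser about this bijection and about the passage from $T_A$ to $Q_A$, but the argument is the same.
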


\begin{proof}
The maximal ideals $\mathfrak{m}$ of $A$ come in two flavors:\ the $r_1$ maximal ideals with $A/\mathfrak{m} \simeq \R$ correspond to real solutions, hence elements of $S$, and the $r_2$ maximal ideals with $A/\mathfrak{m} \simeq \C$ correspond to complex-conjugate pairs of nonreal solutions.  Thus 
\[
\# S = r_1 = (r_1+r_2) - r_2 = \sigma(Q_A),
\]
where the last equality follows since $Q_A$ has type $(r_1+r_2,r_2)$ by Theorem~\ref{SSoverreal}.
\end{proof}

There is a long history of using quadratic forms to study the number of real solutions, going back to the work of Jacobi, Hermite and Sylvester in the 19th century.  In 1936,  Krein and Naimark wrote a nice survey of these developments.  An English translation of their paper was published in 1981 as \cite{KN}. 

Historically, real positive solutions were prefered (in one variable, negative solutions where called \emph{false roots} by Cardan).  More generally, given $h \in \R[x_1,\dots,x_n]$, one can ask for solutions $a \in \R^n$ of $f_1 = \cdots = f_s = 0$ that satisfy $h(a) > 0$ or $h(a) < 0$.  An easy adaptation of the proofs of Theorem~\ref{SSoverreal} and Corollary~\ref{SSrealsolsI} leads to the following result:

\begin{theorem}
\label{SSrealsolsII}
Assume $\langle f_1, \dots, f_s\rangle \subseteq \R[x_1,\dots,x_n]$ is a zero-dimensional ideal.  Set $A = \R[x_1,\dots,x_n]/\langle f_1, \dots, f_s\rangle$ and let
\[
S = \{a \in \R^n \mid  f_1(a) =  \cdots= f_s(a) = 0\}
\]
be the set of real solutions of $f_1 = \cdots = f_s = 0$.  If $h \in \R[x_1,\dots,x_n]$, then the quadratic form $Q_{A,h}$ defined by
\[
Q_{A,h}(\alpha) = \mathrm{Tr}(m_{\alpha^2h})
\]
 has signature 
\[
\sigma(Q_{A,h}) = \#\{a \in S \mid  h(a) > 0\} -  \#\{a \in S \mid  h(a) < 0\}.
\]
\end{theorem}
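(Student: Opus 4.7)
The plan is to mimic the proof of Theorem \ref{SSoverreal} using the weighted symmetric bilinear form
\[
T_{A,h}(\alpha,\beta) = \mathrm{Tr}(m_{\alpha\beta h}),
\]
which polarizes $Q_{A,h}$, so that $\sigma(Q_{A,h}) = \sigma(T_{A,h})$. Because the factors of $A = \prod_{i=1}^{r} A_{\mathfrak{m}_i}$ annihilate each other under multiplication, $T_{A,h}$ is block diagonal in this product decomposition, with blocks $T_{A_{\mathfrak{m}_i},h_i}$, where $h_i$ is the image of $h$. So I would first reduce to proving the local claim that $\sigma(T_{A_{\mathfrak{m}_i},h_i})$ equals the contribution of $\mathfrak{m}_i$ to the right-hand side.

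Next I would apply the Stickelberger Trace Formula (Theorem \ref{ETSS}) inside each local factor, taking $\gamma = \alpha\beta h_i$ to obtain
\[
T_{A_{\mathfrak{m}_i},h_i}(\alpha,\beta) = \lambda_i \mathrm{Tr}_{L_i}(m_{\bar\alpha\bar\beta\bar h_i}) = \lambda_i T_{L_i,\bar h_i}(\bar\alpha,\bar\beta),
\]
where the bar denotes the image in the residue field $L_i$. Thus $T_{A_{\mathfrak{m}_i},h_i}$ is the pullback of $\lambda_i T_{L_i,\bar h_i}$ along the surjection $A_{\mathfrak{m}_i}\to L_i$; in any basis adapted to a splitting of this surjection, the matrix is block diagonal with the $\lambda_i T_{L_i,\bar h_i}$ block and a zero block coming from $\mathfrak{m}_i A_{\mathfrak{m}_i}$. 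Since $\lambda_i>0$, it follows that $\sigma(T_{A_{\mathfrak{m}_i},h_i}) = \sigma(T_{L_i,\bar h_i})$.

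It then remains to compute $\sigma(T_{L_i,\bar h_i})$ in the two cases. If $L_i = \R$, corresponding to a real solution $a \in S$, then $\bar h_i = h(a)$, the matrix in the basis $\{1\}$ is $(h(a))$, and the signature is $\mathrm{sign}(h(a)) \in \{+1,0,-1\}$. If $L_i = \C$, corresponding to a complex-conjugate pair of nonreal solutions, write $\bar h_i = u + v\sqrt{-1}$; a direct calculation in the basis $\{1, \sqrt{-1}\}$ gives
\[
\text{matrix of } T_{L_i,\bar h_i} = \begin{pmatrix} 2u & -2v \\ -2v & -2u \end{pmatrix},
\]
whose determinant $-4(u^2+v^2)$ is nonpositive, forcing signature $0$. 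Summing shows only real maximal ideals contribute, yielding the desired count $\#\{a \in S \mid h(a) > 0\} - \#\{a \in S \mid h(a) < 0\}$.

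The main obstacle is the pullback/scaling step: one must verify that pulling $T_{L_i,\bar h_i}$ back along the surjection $A_{\mathfrak{m}_i} \to L_i$ only adjoins zero eigenvalues (so the signature is preserved) and that scaling by the positive integer $\lambda_i$ leaves the signature alone. Both facts are elementary, but they are the key technical ingredient that lets the passage from the trace formula to the signature formula go through. Everything else parallels the unweighted case in Theorem \ref{SSoverreal} and Corollary \ref{SSrealsolsI}.
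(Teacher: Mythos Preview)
Your proof is correct and follows essentially the same route as the paper's: both invoke the Stickelberger Trace Formula (Theorem~\ref{ETSS}) to reduce $Q_{A,h}$ (equivalently $T_{A,h}$) to a sum of the residue-field forms $\lambda_i Q_{L_i,h}$, then compute the $L_i=\R$ and $L_i=\C$ cases explicitly, obtaining the same $2\times 2$ matrix with signature zero in the complex case. Your more explicit bookkeeping of the local decomposition and the pullback step is a mild organizational difference, not a different idea.
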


\begin{proof}
As in the proof of Theorem~\ref{SSoverreal}, the Stickelberger Trace Formula from Theorem~\ref{ETSS} easily implies 
\[
Q_{A,h} = \sum_{i=1}^r \lambda_i Q_{L_i,h}.
\]
The $r_1$ indices with $L_i \simeq \R$ correspond to elements $a \in S$, and the isomorphism is given by evaluation at $a$.  Thus we can rewrite the above sum as
\[
Q_{A,h} = \sum_{a \in S}  \lambda_i h(a)\hskip1pt Q_\R + \sum_{L_i\simeq\C} \lambda_i Q_{L_i,h}.
\]
The first sum is a quadratic form of signature 
\[
\#\{a \in S \mid  h(a) > 0\} -  \#\{a \in S \mid  h(a) < 0\}.
\]
Hence it suffices to show that $Q_{L_i,h}$ has signature zero when $L_i \simeq \C$.  Such an isomorphism (there are two) is given by evaluation at one of the corresponding pair of complex-conjugate roots of the system.  Call this root $b$ and set $h(b) = u+iv$.  We leave it as an exercise for the reader to show that for the basis $\{1,\sqrt{-1}\}$, the corresponding bilinear form is represented by the symmetric matrix
\[
 \begin{pmatrix} \phantom{-}2u \!& -2v\\ -2v \!&-2u\end{pmatrix},
 \]
which has eigenvalues $\pm2|h(b)|$.  Thus $Q_{L_i,h}$ has signature zero, and we are done.
\end{proof}

This path from Stickelberger to Corollary~\ref{SSrealsolsI} and Theorem~\ref{SSrealsolsII} is lovely but not what happened historically.  Instead, Paul Pedersen \cite{P} and Eberhard Becker \cite{B} discovered these results independently in 1991, with no knowledge at the time of \S94 of Scheja and Storch.  In 1993, Pedersen joined forces with Marie-Fran\c{c}oise Roy and Aviva Szpirglas to write \cite{PRS}, where the authors comment that
\begin{quote}
The structure theory for finite dimensional algebras which we shall present was
first developed by Stickelberger (see [SS 88]).
\end{quote}
While they never say ``Stickelberger's Theorem", this is the first instance I could find of Stickelberger.  Naturally, there is no reference to a paper of his, though the citation to Scheja and Storch is clear.  A year later, in 1994, Becker  and Thorsten W\"ormann published \cite{BW1}, which includes \cite{PRS} in its references.  Thus the link to Stickelberger via Scheja and Storch was established in the literature by 1993.


\section{Conclusion}

We have seen how Stickelberger's 1897 paper influenced Scheja and Storch in 1988.  His name and the link to Scheja and Storch appeared in papers on real solutions starting in 1993, and in 1995, we finally see the label \emph{Stickelberger's Theorem} applied to the Eigenvalue Theorem, with the name becoming standard in the late 1990s.  But in the process, the link to Stickelberger's actual work got lost.  The purpose of this paper is to reestablish the connection and get a better sense of Stickelberger's contribution.

One thing to notice in the papers from the 1990s is the emphasis on \emph{structure}.  In 1993, Pedersen, Roy and Szpirglas \cite{PRS} use a structure theory for finite dimensional algebras ``first developed by Stickelberger'', and in 1995, Gonzalez-Vega and Trujillo \cite{GVT} state a ``Stickelberger Theorem'' that describes the structure of multiplication matrices.  This is not what Stickelberger did; rather, in \cite{S1897}, he proved a trace formula using the known factorization $p\mathcal{O} = \mathfrak{p}^{e_1}_1\cdots \mathfrak{p}_m^{e_m}$.  

The emphasis on structure is really due to Scheja and Storch in their version of Stickelberger's Trace Formula in Lemma 94.6 in \cite{SS}:\  the  ``fine structure of the trace form'' is a consequence of ``simple features of the algebra itself".  This is borne out by their proof of the lemma.  However, their treatment is abstract and non-constructive, while the papers from the 1990s are interested in algorithms.  In these papers, the goal is not to \emph{describe} the structure  but rather  to \emph{compute} the structure.  This is a significant advance beyond what Stickelberger, Scheja and Storch did.

A striking feature of this story is wide range of mathematics involved:
\begin{itemize}
\item Abstract algebra: G\"unter Scheja and Uwe Storch.
\item Algebraic number theory: Ludwig Stickelberger.
\item Computer algebra: Daniel Lazard and Paul Pedersen.
\item Numerical analysis: Winfried Auzinger and Hans Stetter.
\item Real algebraic geometry: Eberhard Becker, Marie-Fran\c{c}oise Roy, Aviva Szpirglas and Thorsten W\"ormann.
\end{itemize}
Of course, the names mentioned here are involved in other areas of research; what the list represents is the perspectives they brought to the story of Stickelberger and the Eigenvalue Theorem.  

I draw two lessons from this diversity.  First, polynomial systems have a wide interest that touches on many areas of mathematics, and second, abstract algebra provides a powerful language that enables us to understand the structure of what is going on.  As an algebraic geometer, I find this to be deeply satisfying.  

A final comment is that the linear maps $m_f$ commute since $m_f \circ m_g = m_{fg}$.  This point of view features in the version of the Eigenvalue Theorem, valid over an arbitrary field, that appears in \cite[Theorem 2.4.3]{KR}.  See also \cite[Section 6.2.A]{KR}.

As noted at the beginning of Section 3, my account of Stickelberger and the Eigenvalue Theorem omits many fine papers.   I apologize for any omissions or inaccuracies on my part.  

\section*{Acknowledgements} I would like to thank Eberhard Becker, Hubert Flenner, Laureano Gonzalez-Vega, Trevor Hyde, Stefan Kekebus, Michael M\"oller, Markus Reineke, Lorenzo Robbiano, Marie-Fran\c{c}oise Roy and Hans Stetter for helpful correspondence about Stickelberger and the Eigenvalue Theorem.  I am grateful to David Eisenbud for all he has done for mathematics and society.   

I also have a story to tell.  In the late 1970s, I learned about the Eisenbud-Levine Theorem \cite{EL}, which computes the topological degree of a $C^\infty$ map germ as the signature of a certain algebraically defined quadratic form (see \cite{Eisenbud} for a lovely exposition).  The topological degree is usually defined using singular cohomology.  I was working on \'etale cohomology at the time, and my paper \cite{Cox} studied a question raised by David of whether \'etale cohomology can be used to define the topological degree (it can't).  In reading the paper of Eisenbud and Levine, I learned about a splendid 1975 paper of Scheja and Storch \cite{SS75} on Spurfunktionen (trace functions).  In their algebra book \cite{SS}, written thirteen years later, \S94 is entitled \emph{Die Spurformen} (Trace Forms).  This is where they make the connection between Stickelberger and the ideas behind the Eigenvalue Theorem.  So dedicating this paper to David is wonderfully appropriate.

\end{document}